\theoremstyle{definition}
\newtheorem{thm}{Theorem}[section]
\newtheorem{lemma}[thm]{Lemma}
\newtheorem{ex}[thm]{Example}
\newtheorem{rmk}[thm]{Remark}
\def\ccc{\mathbb{C}}
\def\zz{\mathbb{Z}}
\def\rr{\mathbb{R}}
\def\p{\partial}
\def\ud{\mathrm{d}}
\def\bea{\begin{equation}}
\def\eea{\end{equation}}
\def\o{{\omega}}
\def\<{{\langle}}
\def\>{{\rangle}}
\def\na{{\nabla}}
\begin{document}

\title{Symplectic geometric flows}
\author{Teng Fei and Duong H. Phong}

\date{}

\maketitle{}

\begin{abstract}
Several geometric flows on symplectic manifolds are introduced which are potentially of interest in symplectic geometry and topology. They are motivated by the Type IIA flow and T-duality between flows in symplectic geometry and flows in complex geometry. Examples include the Hitchin gradient flow on symplectic manifolds, and a new flow which is called the dual Ricci flow.
\end{abstract}

\section{Introduction}

Geometric flows are now well-recognized as a powerful tool for geometry and topology. Major successes include the Eells-Sampson theorem on harmonic maps \cite{eells1964}, Hamilton's Ricci flow \cite{hamilton1982} and Perelman's proof of the Poincar\'e conjecture \cite{perelman2002, perelman2003, perelman2003b}, Donaldson's heat flow proof of the Donaldson-Uhlenbeck-Yau theorem on Hermitian-Yang-Mills connections \cite{donaldson1987}, the differentiable sphere theorem of Brendle-Schoen \cite{brendle2009}, and many other developments. However, these successes were mostly in the settings of Riemannian and complex geometry, leaving the subject of symplectic geometric flows underdeveloped. In recent years, a few geometric flows adapted to symplectic geometry have been introduced, such as L\^ e-Wang's anti-complexified Ricci flow \cite{le2001}, Streets-Tian's symplectic curvature flow \cite{streets2014}, and He's flow of non-degenerate $2$-forms \cite{he2021}. Although there has been much progress on these flows, applications of geometric flows in symplectic geometry and topology have remained relatively few.

\smallskip

In \cite{fei2020b}, in joint work with S. Picard and X.W. Zhang, we introduced the Type IIA flow for symplectic Calabi-Yau 6-manifolds, motivated by the Type IIA string equations proposed in e.g. \cite{tseng2014}. This was further developed in \cite{fei2020c, fei2021b, raffero2021}. In particular, in \cite{fei2021b}, we successfully applied the Type IIA flow to prove the stability of the K\"ahler property for Calabi-Yau 3-folds under symplectic deformations.

\smallskip

The goal of this paper is to introduce some new geometric flows which may potentially be of interest in symplectic geometry and topology. They are motivated by duality considerations as well as by the Type IIA flow, to which several of them are actually closely related. More specifically, in \cite{fei2021e}, S. Picard and the first author had introduced the dual Anomaly flow as the T-dual of the Type IIB flow.\footnote{This flow coincides with the special case of the Anomaly flow \cite{phong2018b, phong2018e} with slope parameter $\alpha'=0$, and  "Type IIB flow" is a more precise name, since there is no longer a gauge field and no Green-Schwarz Anomaly cancellation mechanism
\cite{phong2020, fei2020}.} Both the Type IIB flow and the dual Anomaly flow are flows in complex geometry, and they are only dual to each other up to lower order terms. Here we introduce instead
the principle that by applying T-duality, we can derive a flow in symplectic geometry from a flow in complex geometry. In particular, we show that the Type IIA flow and the Type IIB flow are indeed related to each other by T-duality, as suggested by mirror symmetry, and the duality is now exact without lower order terms. We also derive the T-dual of the K\"ahler Ricci flow in the symplectic setting. In Section 3, we propose the study of the gradient flow of Hitchin's functional in dimension six. In the presence of a compatible symplectic form, this flow takes a similar expression to that of the Type IIA flow. Moreover, it induces a flow of a pair which can be viewed as the anti-complexified Ricci flow with lower order corrections coupled to a scalar function. In Section 4 we present a few explicit examples of the gradient flow of 6D-Hitchin's functional, showing that the flow can be used to find optimal almost complex structures on certain locally homogeneous symplectic half-flat manifolds. These examples naturally provide eternal and convergent solutions to the anti-complexified Ricci flow.

\smallskip

\noindent\textbf{Acknowledgements} The authors dedicate this paper to the beloved Professor Victor Guillemin. Some ideas behind this paper date back to the many conversations the first-named author had with Victor when he was a graduate student at MIT. The second-named author would like to take this opportunity to express his gratitude for the inspiration he got from Professor Guillemin's works, especially in symplectic geometry and partial differential equations, and for the frequent encouragement he received from him over the years.

\section{The Type IIA and the Type IIB flows}

In this section, we would like to present a general principle by which a flow in symplectic geometry should arise by T-duality from a flow in complex geometry. We illustrate this principle by establishing the duality of the Type IIA and the Type IIB flows in the semi-flat case, which is the case when T-duality can be implemented explicitly.

\smallskip

Let us first recall the set-up of semi-flat geometry, in the notations from \cite{fei2021e}. Let $B$ be a 3-dimensional compact special integral affine manifold. We can cover $B$ by local coordinates $\{x^1,x^2,x^3\}$ such that the transition functions are valued in the group $\textrm{SL}(3,\zz)\ltimes\rr^3$. Let $TB$ be the tangent bundle of $B$, with $\{y^1,y^2,y^3\}$ the natural coordinates of tangent directions. Clearly the fiberwise lattice
\[\check\Lambda=\zz\frac{\p}{\p x^1}+\zz\frac{\p}{\p x^2}+\zz\frac{\p}{\p x^3}=\{(y^1,y^2,y^3):y^j\in\zz\}\]
is well-defined, and we may form the quotient $\check X=TB/\check\Lambda$, which is a smooth $T^3$-fibration over $B$. Moreover, $\check X$ is naturally equipped with a complex structure such that $\{z^j=x^j+iy^j\}_{j=1}^3$ forms a set of local holomorphic coordinates. For our purpose, we shall also assume that $B$ is equipped with a Hessian metric $g$ with local potential $\phi$. In other words, there exists a local convex function $\phi$ such that
\[g_{jk}=\frac{\p^2\phi}{\p x^j\p x^k}.\]

We shall also consider the dual affine coordinates $\left\{x_j=\dfrac{\p\phi}{\p x^j}\right\}$ on $B$ under Legendre transformation. Let $\{y_1,y_2,y_3\}$ be the natural coordinates for the cotangent directions associated to the local chart $\{x^1,x^2,x^3\}$. It is not hard to check that the fiberwise lattice
\[\hat\Lambda=\zz\ud x^1+\zz\ud x^2+\zz\ud x^3=\{(y_1,y_2,y_3):y_k\in\zz\}\]
is well-defined, and we may form the fiberwise quotient $\hat X=T^*B/\hat\Lambda$, which is also a smooth $T^3$-fibration over $B$. The natural symplectic form on $T^*B$ descends to a symplectic form on $\hat X$, which in local coordinates can be expressed as
\[\omega=\ud x^1\wedge\ud y_1+\ud x^2\wedge\ud y_2+\ud x^3\wedge\ud y_3.\]
Moreover, $\hat X$ is equipped with a holomorphic volume form $\Omega=\varphi+i\hat\varphi=\ud z_1\wedge\ud z_2\wedge\ud z_3$ for $z_j=x_j+iy_j$. In local coordinates we have
\[\begin{split}\varphi&=\ud x_1\wedge\ud x_2\wedge\ud x_3-\ud x_1\wedge\ud y_2\wedge\ud y_3-\ud y_1\wedge\ud x_2\wedge\ud y_3-\ud y_1\wedge\ud y_2\wedge\ud x_3,\\
\hat\varphi&=\ud x_1\wedge\ud x_2\wedge\ud y_3+\ud x_1\wedge\ud y_2\wedge\ud x_3+\ud y_1\wedge\ud x_2\wedge\ud x_3-\ud y_1\wedge\ud y_2\wedge\ud y_3.\end{split}\]

Now we fix the affine structure on $B$ associated to the local coordinate $\{x^1,x^2,x^3\}$ and vary the Hessian metric $g$ (or equivalently vary the potential $\phi$). The symplectic form $\omega$, coming from the canonical one on $T^*B$, does not change. However, the 3-forms $\varphi$ and $\hat\varphi$ are varying since the dual affine structure associated to $\{x_1,x_2,x_3\}$ is changing.

In particular, if we run the Type IIB flow on $\check X$, by the calculation in \cite{fei2021e}, the flow reduces to the real Monge-Amp\`ere flow of Hessian metrics
\begin{equation}
\p_t g_{jk}=\frac{1}{4}\frac{\p^2}{\p x^j\p x^k}\det g.\label{iib}
\end{equation}
The goal in this section is to prove the following:
\begin{thm}\label{mirror}~\\
Under the Type IIB flow (\ref{iib}), the associated 3-form $\varphi$ satisfies the Type IIA flow \cite{fei2020b}
\begin{equation}
\p_t\varphi=\frac{1}{16}\ud\Lambda_{\omega}\ud(|\varphi|^2\hat\varphi).
\end{equation}
\end{thm}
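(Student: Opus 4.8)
The plan is to reduce the Type IIA flow to the Monge--Amp\`ere flow \eqref{iib} by an explicit computation of both sides in a fixed affine chart. Work in a local affine chart $(x^1,x^2,x^3)$ on $B$ with induced fiber coordinates $(y_1,y_2,y_3)$ on $\hat X$, and set $e^j := \ud x^j$, $f_j := \ud y_j$. These $1$-forms, and hence $\omega = \sum_j e^j\wedge f_j$, are $t$-independent, whereas $\ud x_j = \sum_k g_{jk}e^k$ carries all of the $t$-dependence; writing $u := \det g$, the formula for $\varphi$ becomes
\[
\varphi = u\, e^1\wedge e^2\wedge e^3 - g_{1k}\,e^k\wedge f_2\wedge f_3 + g_{2k}\,e^k\wedge f_1\wedge f_3 - g_{3k}\,e^k\wedge f_1\wedge f_2 ,
\]
and similarly for $\hat\varphi$. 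Two facts carry most of the argument: (i) for each fixed $t$ the forms $\varphi,\hat\varphi$ are \emph{closed}, since $\ud x_j = \ud(\p_j\phi)$ and $\ud y_j$ are exact and hence $\Omega=\ud z_1\wedge\ud z_2\wedge\ud z_3$ is closed; and (ii) $\omega$ is of type $(1,1)$ for the induced complex structure (explicitly $\omega = \tfrac{i}{2}\,g^{jk}\,\ud z_j\wedge\ud\bar z_k$), so $\omega\wedge\varphi = \omega\wedge\hat\varphi = 0$, i.e.\ $\varphi,\hat\varphi$ are $\omega$-primitive and $|\varphi|^2$ is a pointwise function of $g$; a short computation with $\Omega\wedge\bar\Omega$ and $\omega^3$ gives $|\varphi|^2 = 4\det g$ in the normalization of \cite{fei2020b}.

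For the left-hand side, only the $g_{jk}$ evolve, so I would differentiate $\varphi$ in $t$ and substitute \eqref{iib}, namely $\p_t g_{jk} = \tfrac14\p_j\p_k u$ (whence $\p_t u = u\,g^{jk}\p_t g_{jk} = \tfrac14 u\,g^{jk}\p_j\p_k u$), to obtain the explicit $3$-form
\[
\p_t\varphi = \tfrac14\Big(u\,g^{jk}\p_j\p_k u\; e^{1}\wedge e^2\wedge e^3 - \p_1\p_k u\; e^k\wedge f_2\wedge f_3 + \p_2\p_k u\; e^k\wedge f_1\wedge f_3 - \p_3\p_k u\; e^k\wedge f_1\wedge f_2\Big).
\]

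For the right-hand side, since $\hat\varphi$ is closed and $|\varphi|^2 = 4u$ with $u$ a function on $B$, one has $\ud(|\varphi|^2\hat\varphi) = 4\,\ud u\wedge\hat\varphi$. Writing $\ud u = \iota_X\omega$ for the Hamiltonian vector field $X = -\sum_l(\p_l u)\,\p/\p y_l$ of $u$ and using primitivity of $\hat\varphi$, a standard identity for $\Lambda_\omega$ on a wedge product collapses $\Lambda_\omega(\ud u\wedge\hat\varphi)$ to $\pm\,\iota_X\hat\varphi$; then Cartan's formula and $\ud\hat\varphi = 0$ give $\ud\Lambda_\omega\ud(|\varphi|^2\hat\varphi) = \pm 4\,\mathcal{L}_X\hat\varphi$. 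To finish, I would compute this Lie derivative directly: $\iota_X\hat\varphi = -\sum_l(\p_l u)\,\iota_{\p/\p y_l}\hat\varphi$ is an explicit $2$-form with $\ud x_i\wedge\ud x_j$- and $f_i\wedge f_j$-parts, and on applying $\ud$ one uses $\ud(\ud x_j)=0$ together with the cofactor identity $e^k\wedge\ud x_i\wedge\ud x_j = u\,g^{kl}\,(\cdots)\,e^{1}\wedge e^2\wedge e^3$ (valid because $g$ is symmetric, so $\mathrm{adj}\,g = u\,g^{-1}$) to produce exactly the $u\,g^{jk}\p_j\p_k u\; e^{1}\wedge e^2\wedge e^3$ term. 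Comparing with the formula above for $\p_t\varphi$, the two $3$-forms agree coefficient by coefficient, the accumulated numerical factor being precisely $\tfrac1{16}$.

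The hard part is not any individual step --- each is elementary given the vanishings $\ud\varphi = \ud\hat\varphi = 0$ and $\omega\wedge\hat\varphi = 0$ and the cofactor identity --- but the bookkeeping of normalizations: one must pin down consistently the sign and normalization of $\Lambda_\omega$ (equivalently the Hamiltonian $X$ of $\det g$, equivalently $\mathcal{L}_X$) and the exact constant relating $|\varphi|^2$ to $\det g$ in the definition of the norm of a stable $3$-form, and confirm that the product of all these constants is $\tfrac1{16}$. No analytic input (regularity, maximum principle, and so on) is required: the statement is an identity of $3$-forms, valid pointwise wherever $g$ is a positive-definite Hessian metric.
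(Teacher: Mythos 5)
Your proposal is correct and follows essentially the same route as the paper: a direct computation in the semi-flat coordinates using $\ud\varphi=\ud\hat\varphi=0$, the identity $|\varphi|^2=4\det g$, and the explicit action of $\Lambda_\omega$ on $\ud u\wedge\hat\varphi$ (your $\pm\,\iota_X\hat\varphi$ is exactly the paper's $\sum_j(\p_j|\varphi|^2)\,\iota_{\p/\p y_j}\hat\varphi$ up to the factor $4$). The only organizational difference is that you compare coefficients in the fixed frame $e^j,f_j$ and package the right-hand side as a Lie derivative via Cartan's formula, whereas the paper works in the moving coframe $\ud x_j,\ud y_j$ and handles the mixed terms by integrating the flow to the potential level, $\p_t\phi=\tfrac14\det g+l$, so that $\p_t(\ud x_j)=\tfrac{1}{16}\ud\big(\p|\varphi|^2/\p x^j\big)$; both reduce to the same cofactor identity $\mathrm{adj}\,g=(\det g)\,g^{-1}$ for the top-degree term.
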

\begin{proof}
By the definition of Legendre transform, we have
\[\frac{\p x_j}{\p x^k}=\frac{\p^2\phi}{\p x^j\p x^k}=g_{jk},\]
hence
\[\ud x_1\wedge\ud x_2\wedge\ud x_3=\det g\cdot\ud x^1\wedge\ud x^2\wedge\ud x^3.\]
On the other hand, as $\varphi\wedge\hat\varphi=|\varphi|^2\dfrac{\omega^3}{3!}$, we see that
\[|\varphi|^2=4\det g.\]
Because $\Omega$ is holomorphic, both $\varphi$ and $\hat\varphi$ are closed. It follows that $\ud(|\varphi|^2\hat\varphi)=\ud|\varphi|^2\wedge\hat\varphi$, and that
\[\begin{split}&\Lambda_\omega\ud(|\varphi|^2\hat\varphi)=\sum_j\frac{\p|\varphi|^2}{\p x^j}\iota_{\p y_j}\hat\varphi\\
=\quad&\frac{\p|\varphi|^2}{\p x^3}\ud x_1\wedge\ud x_2-\frac{\p|\varphi|^2}{\p x^2}\ud x_1\wedge\ud x_3+\frac{\p|\varphi|^2}{\p x^1}\ud x_2\wedge\ud x_3\\ &-\frac{\p|\varphi|^2}{\p x^3}\ud y_1\wedge\ud y_2+\frac{\p|\varphi|^2}{\p x^2}\ud y_1\wedge\ud y_3-\frac{\p|\varphi|^2}{\p x^1}\ud y_2\wedge\ud y_3.\end{split}\]
Consequently
\[\begin{split}&\ud\Lambda_\omega\ud(|\varphi|^2\hat\varphi)=\left(\frac{\p^2|\varphi|^2}{\p x_1\p x^1}+\frac{\p^2|\varphi|^2}{\p x_2\p x^2}+\frac{\p^2|\varphi|^2}{\p x_3\p x^3}\right)\ud x_1\wedge\ud x_2\wedge\ud x_3\\ -&\ud\left(\frac{\p|\varphi|^2}{\p x^1}\right)\wedge\ud y_2\wedge\ud y_3-\ud y_1\wedge\ud\left(\frac{\p|\varphi|^2}{\p x^2}\right)\wedge\ud y_3-\ud y_1\wedge\ud y_2\wedge\ud\left(\frac{\p|\varphi|^2}{\p x^3}\right).\end{split}\]
Therefore, to prove the theorem, we only need to show that
\[\begin{split}\p_t(\ud x_1\wedge\ud x_2\wedge\ud x_3)=&\frac{1}{16}\left(\frac{\p^2|\varphi|^2}{\p x_1\p x^1}+\frac{\p^2|\varphi|^2}{\p x_2\p x^2}+\frac{\p^2|\varphi|^2}{\p x_3\p x^3}\right)\ud x_1\wedge\ud x_2\wedge\ud x_3,\\
\p_t(\ud x_j)=&\frac{1}{16}\ud\left(\frac{\p|\varphi|^2}{\p x^j}\right)\textrm{ for }j=1,2,3.\end{split}\]
For the first identity above, we notice that
\[\frac{1}{16}\sum_j\frac{\p}{\p x_j}\left(\frac{\p|\varphi|^2}{\p x^j}\right)=\frac{1}{4}\sum_{j,k}\frac{\p x^k}{\p x_j}\frac{\p^2}{\p x^k\p x^j}\det g=\sum_{j,k}g^{jk}\p_t g_{jk}=\frac{\p_t\det g}{\det g}.\]
Therefore
\[\begin{split}&\p_t(\ud x_1\wedge\ud x_2\wedge\ud x_3)=(\p_t\det g)\ud x^1\wedge\ud x^2\wedge\ud x^3\\
=~&\frac{\det g}{16}\left(\frac{\p^2|\varphi|^2}{\p x_1\p x^1}+\frac{\p^2|\varphi|^2}{\p x_2\p x^2}+\frac{\p^2|\varphi|^2}{\p x_3\p x^3}\right)\ud x^1\wedge\ud x^2\wedge\ud x^3\\
=~&\frac{1}{16}\left(\frac{\p^2|\varphi|^2}{\p x_1\p x^1}+\frac{\p^2|\varphi|^2}{\p x_2\p x^2}+\frac{\p^2|\varphi|^2}{\p x_3\p x^3}\right)\ud x_1\wedge\ud x_2\wedge\ud x_3.\end{split}\]
From the evolution equation of $g$, we know that there is an affine function $l$ such that
\[\p_t\phi=\frac{1}{4}\det g+l,\]
hence
\[\p_t(\ud x_j)=\ud\left(\p_t\frac{\p\phi}{\p x^j}\right)=\ud\left(\frac{\p}{\p x^j}\left(\p_t\phi\right)\right)=\frac{1}{4}\ud\left(\frac{\p\det g}{\p x^j}\right)=\frac{1}{16}\ud\left(\frac{\p|\varphi|^2}{\p x^j}\right).\]
\end{proof}

\begin{rmk}
In \cite{fei2021e}, the dual Anomaly flow was defined as a flow of Hermitian metrics on the dual manifold. Thus the setting for both the original flow and its dual were in complex geometry, and the duality only holds modulo lower order terms.
From this point of view, the symplectic and the geometric settings as described above are more naturally dual, as the duality is now exact.
\end{rmk}


\begin{rmk}
In the above theorem, we made the choice of the phase angle so that $\varphi$ and $\hat\varphi$ have the expressions we worked with. In fact, it is well legitimate to choose $\varphi'=\hat\varphi$ and $\hat\varphi'=-\varphi$ instead. By similar and straightforward computation, one can show that $\varphi'$ also satisfies the Type IIA flow
\[\p_t\varphi'=\frac{1}{16}\ud\Lambda_\omega\ud(|\varphi'|^2\hat\varphi'),\]
therefore Theorem \ref{mirror} holds for arbitrary choice of the phase angle.
\end{rmk}

In Theorem \ref{mirror}, we showed that the Type IIA flow and Type IIB flow are related to each other by T-duality in the semi-flat limit. Following this idea, once we have a geometric flow in the complex setting, by applying T-duality, one may arrive at a natural flow in symplectic geometry. In particular, we can find the T-dual of the K\"ahler-Ricci flow in the symplectic world.

\smallskip

In \cite{fei2021e}, the semi-flat reduction of the K\"ahler-Ricci flow was shown to be given by
\begin{equation}\label{kr}
\p_t g_{jk}=\frac{1}{2}\frac{\p^2}{\p x^j\p x^k}\log\det g.
\end{equation}
A similar calculation as in Theorem \ref{mirror} yields the following theorem.

\begin{thm}\label{riccidual}~\\
Under the K\"ahler-Ricci flow (\ref{kr}), the associated 3-form $\varphi$ satisfies the flow
\begin{equation}
\p_t\varphi=\frac{1}{2}\ud\Lambda_{\omega}\ud(\log|\varphi|^2\cdot\hat\varphi).
\end{equation}
\end{thm}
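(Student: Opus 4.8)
The plan is to follow the proof of Theorem~\ref{mirror} almost verbatim, with the nonlinearity $\frac14\det g$ of the Type IIB flow replaced throughout by the nonlinearity $\frac12\log\det g$ of (\ref{kr}). First I would record the two algebraic identities that do not involve the flow at all: from $\frac{\p x_j}{\p x^k}=g_{jk}$ one has $\ud x_1\wedge\ud x_2\wedge\ud x_3=\det g\cdot\ud x^1\wedge\ud x^2\wedge\ud x^3$, and from $\varphi\wedge\hat\varphi=|\varphi|^2\frac{\o^3}{3!}$ one has $|\varphi|^2=4\det g$. Since $\Omega$ is holomorphic, $\varphi$ and $\hat\varphi$ are closed, so $\ud(\log|\varphi|^2\cdot\hat\varphi)=\ud\log|\varphi|^2\wedge\hat\varphi$; because $\log|\varphi|^2$ is (the pullback of) a function on $B$, the contraction with $\o$ is computed exactly as in Theorem~\ref{mirror} with $|\varphi|^2$ replaced by $\log|\varphi|^2$, giving
\[\Lambda_\o\,\ud(\log|\varphi|^2\cdot\hat\varphi)=\sum_j\frac{\p\log|\varphi|^2}{\p x^j}\,\iota_{\p y_j}\hat\varphi,\]
and applying $\ud$ once more,
\[\ud\Lambda_\o\,\ud(\log|\varphi|^2\cdot\hat\varphi)=\left(\sum_j\frac{\p^2\log|\varphi|^2}{\p x_j\,\p x^j}\right)\ud x_1\wedge\ud x_2\wedge\ud x_3-\sum_{j=1}^3\varepsilon_j,\]
where $\varepsilon_1=\ud\!\left(\frac{\p\log|\varphi|^2}{\p x^1}\right)\wedge\ud y_2\wedge\ud y_3$, $\varepsilon_2=\ud y_1\wedge\ud\!\left(\frac{\p\log|\varphi|^2}{\p x^2}\right)\wedge\ud y_3$ and $\varepsilon_3=\ud y_1\wedge\ud y_2\wedge\ud\!\left(\frac{\p\log|\varphi|^2}{\p x^3}\right)$ are the vertical correction terms appearing in the proof of Theorem~\ref{mirror}.

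Next, exactly as in that proof, matching this against $\p_t\varphi=\p_t(\ud x_1\wedge\ud x_2\wedge\ud x_3)-\p_t(\ud x_1)\wedge\ud y_2\wedge\ud y_3-\ud y_1\wedge\p_t(\ud x_2)\wedge\ud y_3-\ud y_1\wedge\ud y_2\wedge\p_t(\ud x_3)$ (the $\ud x^j$ and $\ud y_j$ being fixed) reduces the theorem to
\[\p_t(\ud x_1\wedge\ud x_2\wedge\ud x_3)=\frac12\left(\sum_j\frac{\p^2\log|\varphi|^2}{\p x_j\,\p x^j}\right)\ud x_1\wedge\ud x_2\wedge\ud x_3,\qquad \p_t(\ud x_j)=\frac12\,\ud\!\left(\frac{\p\log|\varphi|^2}{\p x^j}\right).\]
For the first, write $\log|\varphi|^2=\log4+\log\det g$ so that the constant is annihilated by the derivatives; then (\ref{kr}) in the form $\frac12\frac{\p^2}{\p x^k\p x^j}\log\det g=\p_tg_{jk}$ together with the Legendre chain rule $\frac{\p}{\p x_j}=\sum_k g^{jk}\frac{\p}{\p x^k}$ gives
\[\frac12\sum_j\frac{\p}{\p x_j}\!\left(\frac{\p\log|\varphi|^2}{\p x^j}\right)=\sum_{j,k}g^{jk}\p_tg_{jk}=\frac{\p_t\det g}{\det g},\]
and combining this with $\p_t(\ud x_1\wedge\ud x_2\wedge\ud x_3)=(\p_t\det g)\,\ud x^1\wedge\ud x^2\wedge\ud x^3$ proves it. For the second, (\ref{kr}) says the Hessian of $\p_t\phi-\frac12\log\det g$ vanishes, so there is an affine function $l$ with $\p_t\phi=\frac12\log\det g+l$; hence $\p_t(\ud x_j)=\ud\!\left(\p_t\frac{\p\phi}{\p x^j}\right)=\ud\!\left(\frac{\p}{\p x^j}(\p_t\phi)\right)=\frac12\,\ud\!\left(\frac{\p\log\det g}{\p x^j}\right)=\frac12\,\ud\!\left(\frac{\p\log|\varphi|^2}{\p x^j}\right)$.

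I do not expect a genuine obstacle here: the computation has exactly the architecture of Theorem~\ref{mirror}---contract $\ud(\log|\varphi|^2\cdot\hat\varphi)$ with $\o$, apply $\ud$, and match the output against $\p_t\varphi$ through the Legendre transform $x_j=\p\phi/\p x^j$. The only two points that require a moment's care are that the additive constant $\log4$ relating $|\varphi|^2$ to $\det g$ is killed by every derivative that occurs, so it is harmless to phrase the flow in terms of $\log|\varphi|^2$ rather than $\log\det g$; and that, as in Theorem~\ref{mirror}, the affine ambiguity $l$ in the potential $\phi$ is killed by the second-order operator $\ud\circ(\p/\p x^j)$, so it has no effect on $\p_t(\ud x_j)$.
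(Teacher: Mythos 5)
Your proposal is correct and is exactly the argument the paper intends: the paper gives no separate proof of Theorem~\ref{riccidual}, saying only that ``a similar calculation as in Theorem~\ref{mirror} yields the theorem,'' and your computation is precisely that calculation with $\frac14\det g$ replaced by $\frac12\log\det g$, with the constant $\log 4$ and the affine ambiguity $l$ correctly observed to be harmless. No gaps.
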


By dropping the non-essential factor of $\dfrac{1}{2}$, we shall call the flow
\begin{equation}\label{dr}
\begin{cases}
&\p_t\varphi=\ud\Lambda_{\omega}\ud(\log|\varphi|^2\cdot\hat\varphi)\\
&\varphi_{t=0}=\varphi_0\textrm{ is a closed, primitive, and positive $3$-form}
\end{cases}
\end{equation}
the \emph{dual Ricci flow}.

Due to the presence of the factor $\log|\varphi|^2$ in ($\ref{dr}$), one can imagine that the short-time existence and uniqueness of (\ref{dr}) would be very challenging to establish, if there is such a theorem.

\section{The Hitchin gradient flow on a symplectic manifold}

We begin by recalling the functional introduced by Hitchin \cite{hitchin2000}. Let $M$ be an oriented compact 6-manifold. Following \cite{hitchin2000, fei2015b}, given any positive 3-form $\varphi$ on $M$, or equivalently any reduction of the structure group of $M$ to $\mathrm{SL}(3,\ccc)$, there is a naturally associated almost complex structure $J_\varphi$, and another 3-form $\hat\varphi=J_\varphi\varphi$, such that the form $\Omega=\varphi+\sqrt{-1}\hat\varphi$ is a nowhere vanishing (3,0)-form with respect to the almost complex structure $J_\varphi$.

The Hitchin functional is defined as
\[H(\varphi)=\frac{1}{2}\int_M\varphi\wedge\hat\varphi.\]

Assume now that $\varphi$ is closed. Hitchin proposed the variational problem of finding the critical points of $H(\varphi)$, subject to the constraint of $\varphi$ being in a given de Rham cohomology class. In particular, he showed in \cite{hitchin2000} that
\[\delta H=\int_M\delta\varphi\wedge\hat\varphi.\]
Hence the critical points of $H(\varphi)$ are exactly those $\varphi$ such that $\ud\hat\varphi=0$, or equivalently, such that $J_\varphi$ is integrable.

\smallskip
We would like to approach this problem by considering a gradient flow for the Hitchin functional. For this, we need to introduce a metric on $M$.
A natural way to do so is to put a symplectic form $\omega$ on $M$. As shown in \cite{fei2020b}, the almost complex structure $J_\varphi$ is compatible with $\omega$ if and only if $\varphi$ is primitive with respect to $\omega$. In this way, we can rewrite Hitchin's variational formula as
\[\delta H=\int_M(\delta\varphi,\varphi)\frac{\omega^3}{3!}.\]
Since the cohomology class of $\varphi$ is not changing, we may write $\delta\varphi=\ud\delta\beta$, so
\[\delta H=\int_M\langle\ud\delta\beta,\varphi\rangle\frac{\omega^3}{3!}=\int_M\langle\delta\beta,\ud^\dagger\varphi\rangle\frac{\omega^3}{3!}.\]
Consequently the gradient flow of Hitchin's functional is $\p_t\beta=\ud^\dagger\varphi$, or
\[\p_t\varphi=\p_t\ud\beta=\ud\ud^\dagger\varphi.\]
In this form, the gradient flow of the Hitchin functional on a compact $6$-dimensional symplectic manifold can be viewed as the $6$-dimensional version of Bryant's Laplacian flow on $7$-manifolds, whose stationary points are given by manifolds with $G_2$-holonomy \cite{bryant2006}.

\medskip
Our first observation is that the symplectic version of the gradient flow for the Hitchin functional is actually a degenerate version
of the Type IIA flow introduced in \cite{fei2020b}:

\begin{thm}
\label{hi:thm}~\\
Let $M$ be a compact $6$-dimensional manifold equipped with a symplectic form $\omega$. Then the gradient flow of the Hitchin functional with a closed, primitive, and positive initial $3$-form can be equivalently expressed as
\bea
\label{hi}
\p_t\varphi=\ud\Lambda_{\omega}\ud\hat\varphi
\eea
\end{thm}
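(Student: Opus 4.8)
The plan is to reduce Theorem~\ref{hi:thm} to the pointwise identity $\ud^\dagger\varphi=\Lambda_{\omega}\ud\hat\varphi$: since the gradient flow of the Hitchin functional has just been written as $\p_t\varphi=\ud\ud^\dagger\varphi$, applying $\ud$ to this identity produces exactly (\ref{hi}). To evaluate $\ud^\dagger\varphi$ I would route it through the Riemannian Hodge star $\star$ of the metric $g=g(\omega,J_\varphi)$. Because $\varphi$ is primitive and positive, $J_\varphi$ is compatible with $\omega$, so $g$ is Hermitian with fundamental form $\omega$, and $\Omega=\varphi+\sqrt{-1}\,\hat\varphi$ is a nowhere-vanishing primitive $(3,0)$-form. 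Weil's identity for primitive forms then gives $\star\Omega=-\sqrt{-1}\,\Omega$, hence $\star\varphi=\hat\varphi$ and $\star\hat\varphi=-\varphi$; combined with the relation $\ud^\dagger=-\star\ud\star$ on forms of any degree on a Riemannian $6$-manifold, this yields $\ud^\dagger\varphi=-\star\ud\hat\varphi$, so that the theorem is reduced to the assertion $-\star(\ud\hat\varphi)=\Lambda_{\omega}(\ud\hat\varphi)$.

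The crux is then a linear-algebra lemma: on a $6$-manifold with symplectic form $\omega$ and compatible metric $g$, every $4$-form $\beta$ with $\omega\wedge\beta=0$ satisfies $-\star\beta=\Lambda_{\omega}\beta$. Note that $-\star$ and $\Lambda_{\omega}$ do \emph{not} coincide on all $4$-forms — they already disagree on the $\omega^{2}$-component — and the hypothesis $\omega\wedge\beta=0$ is precisely what removes that component. Concretely, by the Lefschetz decomposition in real dimension six, $\omega\wedge\beta=0$ forces $\beta=\omega\wedge\gamma$ for a primitive $2$-form $\gamma$; the $\mathfrak{sl}_{2}$-commutator $[\Lambda_{\omega},\,\omega\wedge(\cdot)]$ gives $\Lambda_{\omega}(\omega\wedge\gamma)=\gamma$, while Weil's identity on the primitive class $P^{2}$ gives $\star(\omega\wedge\gamma)=-\gamma$.

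It remains to check the hypothesis of the lemma for $\beta=\ud\hat\varphi$. Since $\hat\varphi=J_\varphi\varphi$ is of type $(3,0)+(0,3)$, it is again a primitive $3$-form on $M$, hence $\omega\wedge\hat\varphi=0$; differentiating and using $\ud\omega=0$ gives $\omega\wedge\ud\hat\varphi=\ud(\omega\wedge\hat\varphi)=0$. The lemma applies, $-\star\ud\hat\varphi=\Lambda_{\omega}\ud\hat\varphi$, and chaining the identities gives $\p_t\varphi=\ud\ud^\dagger\varphi=-\ud\star\ud\star\varphi=-\ud\star\ud\hat\varphi=\ud\Lambda_{\omega}\ud\hat\varphi$. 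I would also remark that closedness of $\varphi$ is not used anywhere in this chain; it enters only in the derivation of $\p_t\varphi=\ud\ud^\dagger\varphi$ as a genuine gradient flow with fixed cohomology class.

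The step I expect to demand the most care is fixing the sign in $\star\varphi=\hat\varphi$: this requires committing to the orientation (the one given by $\omega^{3}/3!$, which for a primitive positive $\varphi$ agrees with the orientation determined by $\varphi\wedge\hat\varphi$), to the convention for the action of $J_\varphi$ on $3$-forms, and to the normalization implicit in Hitchin's construction of $\Omega$ from $\varphi$ — I would lean here on the conventions already fixed in \cite{hitchin2000, fei2020b}. The remaining signs, namely the $(-1)^{k(k+1)/2}$ in Weil's identity and the $\mathfrak{sl}_{2}$-weights in the Lefschetz commutation, are routine bookkeeping. The only genuinely conceptual point is that $-\star$ and $\Lambda_{\omega}$ agree when applied to $\ud\hat\varphi$, and this rests entirely on the primitivity of $\hat\varphi$ — equivalently, on $J_\varphi$ being $\omega$-compatible, equivalently, on $\varphi$ being primitive.
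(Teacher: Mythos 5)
Your overall reduction matches the paper's: both proofs reduce (\ref{hi}) to the pointwise identity $-\!*\!\ud\hat\varphi=\Lambda_\omega\ud\hat\varphi$ via $\ud^\dagger\varphi=-\!*\!\ud\!*\!\varphi$ and $*\varphi=\hat\varphi$. But your key lemma --- that \emph{every} $4$-form $\beta$ with $\omega\wedge\beta=0$ satisfies $-\!*\!\beta=\Lambda_\omega\beta$ --- is false, and this is a genuine gap, not a bookkeeping issue. The condition $\omega\wedge\beta=0$ only forces $\beta=\omega\wedge\gamma$ with $\gamma$ a primitive $2$-form; it does not force $\gamma$ to be of type $(1,1)$. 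Weil's identity reads $*(\omega\wedge\gamma)=-J\gamma$ for primitive $\gamma$, so $-\!*\!(\omega\wedge\gamma)=\Lambda_\omega(\omega\wedge\gamma)=\gamma$ holds exactly when $J\gamma=\gamma$, i.e.\ when $\gamma$ is $(1,1)$; on the $(2,0)+(0,2)$ part one has $J\gamma=-\gamma$ and the two operators differ by a sign. Concretely, in the adapted frame with $\omega=e^{12}+e^{34}+e^{56}$, take $\gamma=e^{13}-e^{24}$ (real part of a $(2,0)$-form, primitive): then $\omega\wedge\gamma=e^{1356}-e^{2456}$, $\Lambda_\omega(\omega\wedge\gamma)=\gamma$, but $*(\omega\wedge\gamma)=+\gamma$, so $-\!*\!(\omega\wedge\gamma)=-\gamma\neq\gamma$. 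So the $\omega^2$-component is not the only obstruction you must remove; the $(3,1)+(1,3)$-component of $\ud\hat\varphi$ (equivalently the $L(P^{2,0}\oplus P^{0,2})$-component) must also vanish, and your argument never addresses it.

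This missing input is exactly where the closedness of $\varphi$ enters --- so your closing remark that ``closedness of $\varphi$ is not used anywhere in this chain'' is the tell. Writing $\Omega=\varphi+\sqrt{-1}\hat\varphi$ and decomposing $\ud$ by type, $\ud\Omega=\bar\partial\Omega+\bar\mu\Omega$ with $\bar\partial\Omega\in\Lambda^{3,1}$ and $\bar\mu\Omega\in\Lambda^{2,2}$; the condition $\ud\varphi=\mathfrak{Re}\,\ud\Omega=0$ forces the $(3,1)$-part $\bar\partial\Omega$ to vanish (it equals $\bar\alpha\wedge\Omega$ for a $(0,1)$-form $\bar\alpha$, and a $(3,1)$-form with vanishing real part is zero), leaving $\ud\hat\varphi$ of pure type $(2,2)$. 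Only then does $\omega\wedge\ud\hat\varphi=0$ give $\ud\hat\varphi=\omega\wedge\gamma$ with $\gamma$ primitive \emph{and} $(1,1)$, which is precisely the content of \cite[Lemma 18]{fei2020b} that the paper invokes at this step. With that lemma (or the type argument above) substituted for your false intermediate claim, the rest of your chain of identities, including the signs $*\varphi=\hat\varphi$ and $\ud^\dagger=-\!*\!\ud\,*$ in even dimensions, is correct and coincides with the paper's proof.
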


\begin{proof} Note that, formally, the flow (\ref{hi}) preserves the closedness and primitiveness of $\varphi$. It suffices to prove the following identity
\[\ud\ud^\dagger\varphi=\ud\Lambda_\omega\ud\hat\varphi.\]
for any closed primitive positive $3$-form $\varphi$. Since $\ud^\dagger\varphi=-\!*\!\ud\!*\!\varphi=-\!*\!\ud\hat\varphi$, we only need to show that $-\!*\!\ud\hat\varphi=\Lambda_\omega\ud\hat\varphi$. But it follows from \cite[Lemma 18]{fei2020b} that $\ud\hat\varphi$ is the product of $\omega$ with a primitive $(1,1)$-form.
\end{proof}

We observe that, just as the gradient flow of the $6$-dimensional Hitchin functional is related to the Type IIA flow by Theorem \ref{hi:thm}, there is also a 7-dimensional version of Hitchin's functional, whose gradient flow is exactly Bryant's Laplacian flow  for closed $\mathrm{G}_2$-structures, as explained in \cite{lotay2020}.

\bigskip
We show next that the symplectic flow (\ref{hi}) is weakly parabolic. For this we need to determine the eigenvalues of its principal symbol. We follow the procedure developed in \cite{fei2020b} for the Type IIA flow. We start from the following formula derived in \cite{fei2020b}
\bea
\label{eq:variation}
\delta\hat\varphi
=-J_\varphi(\delta\varphi)-2{\langle\delta\varphi,\hat\varphi\rangle\over |\varphi|^2}\varphi+2{\langle\delta\varphi,\varphi\rangle\over|\varphi|^2}\hat\varphi.
\eea
Since the symbol of the exterior differential is $\xi\wedge{\cdot}$, it follows that the symbol of the operator $\varphi\to \ud\Lambda_\omega\ud(\hat\varphi)$ is given by
\bea
\delta\varphi
\ \to\
\xi \wedge\bigg\{\Lambda_\omega\left[\xi\wedge (-J_\varphi \delta\varphi-2{\langle\delta\varphi,\hat\varphi\rangle\over |\varphi|^2}\varphi+2{\langle\delta\varphi,\varphi\rangle\over|\varphi|^2}\hat\varphi)\right]\bigg\}.
\eea

It is no loss of generality to assume that
\bea
\xi=e^1,\qquad |\varphi|=1,
\eea
to write $J_\varphi=J$, $\Lambda_\omega=\Lambda$, and to work on an adapted frame for $\varphi$, where we have, as shown in \cite{fei2020b}
\[\begin{split}
&
\o=e^{12}+e^{34}+e^{56}\nonumber\\
&
\varphi={1\over 2}(e^{135}-e^{146}-e^{245}-e^{236}),\quad
\hat\varphi
={1\over 2}(e^{136}+e^{145}+e^{235}-e^{246}).
\end{split}\]
Note that in the frame $\{e_j\}$ of vectors, we have $Je_{2k-1}=e_{2k}$, $J(e_{2k})=-e_{2k-1}$, so in this co-frame, we have $J(e^{2k-1})=-e^{2k}$,
$J(e^{2k})=e^{2k-1}$. We need to identify the eigenvalues of this operator, restricted to the subspace $W$ of $3$-forms $\delta\varphi$ satisfying the constraints resulting from $\varphi$ being closed and primitive
\bea
W=\{\delta\varphi;\quad  \xi\wedge \delta\varphi=0, \quad \Lambda(\delta\varphi)=0\},
\eea
which can be worked out to be
\bea
W=e^1\wedge W'
\eea
where $W'$ is the space of $2$-forms  $\gamma$ on the vector space $V'$ spanned by $\{e_j\}_{j=3}^6$, which satisfies the constraint $\Lambda'\gamma=0$, where $\Lambda'$ is the Hodge contraction operator on $V'$ with respect to the symplectic form $\o'=e^{34}+e^{56}$. Next we compute
\[\begin{split}
&
\Lambda[e^1\wedge (J\delta\varphi)]
=
\Lambda[e^1\wedge (-e^2\wedge J\gamma)]
=
-J\gamma
\nonumber\\
&
\Lambda[e^1\wedge\hat\varphi]={1\over 2}(e^{35}-e^{46})
\nonumber\\
&
\Lambda[e^1\wedge\varphi]=-{1\over 2}(e^{45}+e^{36}).
\end{split}\]
We also work out the inner products $\langle\delta\varphi,\varphi\rangle$ and $\langle\delta\varphi,\hat\varphi\rangle$,
\[\begin{split}
&
\langle\delta\varphi,\varphi\rangle={1\over 2}\langle e^1\wedge\gamma, e^{135}-e^{146}\rangle={1\over 2}\langle\gamma,e^{35}-e^{46}\rangle
\nonumber\\
&
\langle\delta\varphi,\hat\varphi\rangle={1\over 2}\langle e^1\wedge\gamma,e^{136}+e^{145} \rangle={1\over 2}\langle\gamma, e^{36}+e^{45}\rangle.
\end{split}\]
Thus the symbol map on $W'$ becomes
\bea
\gamma\
\to
\
J\gamma+\langle\delta\varphi,\hat\varphi\rangle
(e^{45}+e^{36})
+
\langle\delta\varphi,\varphi\rangle(e^{35}-e^{46})
\eea
It is convenient to use the following basis for the $5$-dimensional space $W'$,
\bea
\kappa=e^{34}-e^{56},
\ \mu_1^\pm=e^{45}\pm e^{36},
\  \mu_2^\pm=e^{35}\pm e^{46}
\eea
which are all eigenvectors of $J$,
\[\begin{split}
&
J(e^{34}-e^{56})=(e^{34}-e^{56}) \nonumber\\
&
J(e^{45}+e^{36})=-(e^{45}+e^{36}),
\quad
J(e^{45}-e^{36})=e^{45}-e^{36} \nonumber\\
&
J(e^{35}-e^{46})=-(e^{35}-e^{46}),
\quad
J(e^{35}+e^{46})=e^{35}+e^{46}.
\end{split}\]
The symbol becomes the following operator
\bea
\gamma
\ \to
\ J\gamma+{1\over 2}\<\gamma,\mu_2^-\>\mu_2^-+{1\over 2}\<\gamma,\mu_1^+\>\mu_1^+
\eea
This implies readily

\begin{lemma}
The above basis turns out to be all eigenvectors of the symbol map
\[\begin{split}
&
\kappa\ \to \kappa,
\quad
\mu_1^+\ \to \ 0,
\quad
\mu_2^+\ \to\ \mu_2^+\\
&
\mu_1^-\ \to\ \mu_1^-,
\quad
\mu_2^-\ \to\ 0.
\end{split}\]
\end{lemma}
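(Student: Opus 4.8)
The plan is to verify directly that each of the five listed basis vectors is an eigenvector of the symbol operator
\[
S\colon\gamma\ \mapsto\ J\gamma+\tfrac12\<\gamma,\mu_2^-\>\mu_2^-+\tfrac12\<\gamma,\mu_1^+\>\mu_1^+,
\]
using the eigenvalue list for $J$ already recorded just above the statement. First I would note that the five $2$-forms $\kappa,\mu_1^\pm,\mu_2^\pm$ are pairwise orthogonal with respect to the induced inner product on $V'$: each is a sum or difference of two of the four ``monomial'' $2$-forms $e^{34},e^{56},e^{45},e^{36},e^{35},e^{46}$, and distinct monomials are orthogonal, so the only cross terms to check within $\{\kappa,\mu_1^+,\mu_1^-,\mu_2^+,\mu_2^-\}$ vanish (for instance $\<\mu_1^+,\mu_1^-\>=\<e^{45}+e^{36},e^{45}-e^{36}\>=|e^{45}|^2-|e^{36}|^2=0$). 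This orthogonality is what makes the two rank-one correction terms in $S$ act diagonally on this basis.

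Next I would run through the five cases. For $\kappa$: orthogonality gives $\<\kappa,\mu_2^-\>=\<\kappa,\mu_1^+\>=0$, so $S\kappa=J\kappa=\kappa$ by the first line of the $J$-eigenvalue list. For $\mu_2^+$: again both inner products with $\mu_2^-$ and $\mu_1^+$ vanish by orthogonality, so $S\mu_2^+=J\mu_2^+=\mu_2^+$. For $\mu_1^-$: both corrections vanish, so $S\mu_1^-=J\mu_1^-=\mu_1^-$. For $\mu_1^+$: here $\<\mu_1^+,\mu_2^-\>=0$ but $\<\mu_1^+,\mu_1^+\>=|e^{45}|^2+|e^{36}|^2=2$, and $J\mu_1^+=-\mu_1^+$, so $S\mu_1^+=-\mu_1^++\tfrac12\cdot2\cdot\mu_1^+=0$. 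Symmetrically for $\mu_2^-$: $\<\mu_2^-,\mu_1^+\>=0$, $\<\mu_2^-,\mu_2^-\>=2$, and $J\mu_2^-=-\mu_2^-$, so $S\mu_2^-=-\mu_2^-+\mu_2^-=0$. This exhausts the claim.

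The only genuine subtlety — what I would flag as the main point to get right rather than a true obstacle — is the normalization of the inner product on $2$-forms, i.e.\ whether $|e^{ij}|^2=1$ in the adapted orthonormal coframe. One must confirm that the inner product used in the correction terms of the symbol (inherited from $\o'=e^{34}+e^{56}$ and the associated metric, as in \cite{fei2020b}) indeed makes $\{e^{34},e^{56},e^{45},e^{36},e^{35},e^{46}\}$ orthonormal, so that $|\mu_i^\pm|^2=2$; any stray factor here would rescale the correction terms and change which vectors map to zero. Since $\{e_j\}$ is an adapted (orthonormal) frame for $\varphi$ in the sense of \cite{fei2020b} and $\o=e^{12}+e^{34}+e^{56}$, this normalization holds, and the computation above goes through verbatim. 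Everything else is a finite, mechanical check against the displayed $J$-eigenvalue table, so no further machinery is needed.
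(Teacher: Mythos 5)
Your proposal is correct and matches the paper's intent exactly: the paper simply asserts that the eigenvector table ``follows readily'' from the displayed form of the symbol map $\gamma \mapsto J\gamma+\tfrac12\<\gamma,\mu_2^-\>\mu_2^-+\tfrac12\<\gamma,\mu_1^+\>\mu_1^+$ together with the $J$-eigenvalue list, and your case-by-case check (using orthogonality of the basis and $|\mu_i^\pm|^2=2$ so that the rank-one corrections exactly cancel the $-1$ eigenvalues on $\mu_1^+$ and $\mu_2^-$) is precisely that omitted verification. Your flagged normalization point is the right one to worry about, and it holds in the adapted orthonormal coframe as you say.
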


Thus the Hitchin gradient flow is more degenerate than the Type IIA flow, whose principal symbol only has one zero eigenvalue. This additional degeneracy prevents a proof of short-time existence for general symplectic manifolds and general data along the lines of \cite{fei2020b}. Nevertheless, as we shall see in the next section, the Hitchin gradient flow exists on many interesting manifolds and exhibits a variety of remarkable phenomena. For the remaining part of this section, we shall just assume that the Hitchin gradient flow exists on a symplectic manifold, and derive the corresponding evolution equations for geometric quantities of interest, such as the metric $g_{ij}$ and $|\varphi|^2$.

\medskip
The key point in this derivation is that the Type IIA structure is preserved under the flow (\ref{hi}), hence we are free to use various identities in Type IIA geometry developed in \cite{fei2020b, fei2020c}.
Otherwise, in \cite{fei2020b} and \cite{fei2020c}, two different methods were given for deriving the evolution equations for the metric and the term $|\varphi|^2$ in the Type IIA flow. Both methods can be readily adapted to the present case of the Hitchin gradient flow. However, since the Hitchin gradient flow is given by a Laplacian, and the method of \cite{fei2020c} gives explicit Bochner-Kodaira formulas for the Laplacian on $3$-forms, it is easiest to just extract from \cite{fei2020c} what we need.

\smallskip

Recall that the metric $g_\varphi$ is defined by $g_\varphi(X,Y)=\omega(X,J_\varphi Y)$. We denote it by just $g_{ij}$ for simplicity, and also consider the metric $\tilde g_{ij}$ defined by
\[
\tilde g_{ij}=|\varphi|^2 g_{ij}=\tilde g_{ij}=-\varphi_{jkp}\varphi_{iab}\o^{ka}\o^{pb}.
\]

We consider first the flow of the metric $\tilde g_{ij}$, which is then given by
\[\begin{split}
\p_t\tilde g_{ij}&=-\p_t\varphi_{jkp}\varphi_{iab}\o^{ka}\o^{pb}+(i\leftrightarrow j)\\
&=-(\ud\ud^\dagger \varphi)_{jkp}\varphi_{iab}\o^{ka}\o^{pb}+(i\leftrightarrow j)
\end{split}
\]
The contribution of $\ud\ud^\dagger$ has been worked out in \cite{fei2020c}, Lemma 14. It is given by
\[\begin{split}
&
-(\ud\ud^\dagger\varphi)_{jkp}\varphi_{iab}\o^{ka}\o^{pb}+(i\leftrightarrow j)
\nonumber\\
&
\quad
=
|\varphi|^2\big\{
Rg_{ij}-2({\frak D}_kN_{ij}{}^k+{\frak D}_kN_{ji}{}^k)
+(-\na_\mu\na^\mu u+N^2)g_{ij}+2(N_i{}^k{}_j+N_j{}^k{}_i)\p_ku
\nonumber\\
&
\quad\quad\quad\quad
-4(N_-^2)_{ij}+8(N_+^2)_{ij}\big\}
\end{split}\]
where the scalar function $u$ is defined by
\bea
u=\log |\varphi|^2
\eea
and the quadratic expressions $N_-^2$ and $N_+^2$ in the Nijenhuis tensor are defined by
\[
(N_+^2)_{ij}=N^{pk}{}_iN_{pkj},
\quad
(N_-^2)_{ij}
=N^{kp}{}_iN_{pkj}.
\]
We can now make use of the following identities for the Nijenhuis tensor and the scalar curvature $R$ in Type IIA geometry established in \cite{fei2020b}
\[\begin{split}
&
(N_-^2)_{ij}=2(N_+^2)_{ij}-{1\over 4}|N|^2 g_{ij},
\\
&
R=\Delta u-|N|^2=\na_\mu\na^\mu u-|N|^2
\end{split}
\]
and arrive at the following result:

\begin{lemma}
The flow of the metric $\tilde g_{ij}$ is given by
\bea
\p_t\tilde g_{ij}
=
-|\varphi|^2\big\{2({\frak D}_kN_{ij}{}^k+{\frak D}_kN_{ji}{}^k)-
2\p_pu(N_j{}^p{}_i+N_i{}^p{}_j)-|N|^2 g_{ij}\big\}
\nonumber
\eea
Here the covariant derivatives and scalar curvature are with respect to the metric $g_{ij}$.
\end{lemma}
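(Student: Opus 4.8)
The plan is to start from the expression for $\p_t\tilde g_{ij}$ derived just above the statement, namely
\[
\p_t\tilde g_{ij}=|\varphi|^2\big\{Rg_{ij}-2({\frak D}_kN_{ij}{}^k+{\frak D}_kN_{ji}{}^k)+(-\na_\mu\na^\mu u+N^2)g_{ij}+2(N_i{}^k{}_j+N_j{}^k{}_i)\p_ku-4(N_-^2)_{ij}+8(N_+^2)_{ij}\big\},
\]
and substitute the two Type IIA identities quoted from \cite{fei2020b}. First I would use $R=\na_\mu\na^\mu u-|N|^2$ to cancel the $Rg_{ij}$ term against the $-\na_\mu\na^\mu u\, g_{ij}$ term, producing $-|N|^2 g_{ij}$ plus the leftover $N^2 g_{ij}$. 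Here one must be careful about what $N^2$ denotes in the Bochner–Kodaira formula of \cite{fei2020c}: it should be (a trace of) the quadratic Nijenhuis expression, and the point is that after using $R=\Delta u-|N|^2$ the scalar contributions $(N^2-|N|^2)g_{ij}$ combine with the tensorial terms below. Second, I would feed in $(N_-^2)_{ij}=2(N_+^2)_{ij}-\tfrac14|N|^2 g_{ij}$ to eliminate $(N_-^2)_{ij}$: the combination $-4(N_-^2)_{ij}+8(N_+^2)_{ij}$ becomes $-8(N_+^2)_{ij}+|N|^2 g_{ij}+8(N_+^2)_{ij}=|N|^2 g_{ij}$. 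The $(N_+^2)_{ij}$ terms therefore cancel identically, which is the main simplification.

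After these substitutions the scalar terms should collect to exactly $|N|^2 g_{ij}$ (the $-|N|^2 g_{ij}$ from the curvature cancellation, the leftover $N^2 g_{ij}$, and the $+|N|^2 g_{ij}$ from the $N_-^2$ identity, with $N^2$ interpreted consistently so that the net coefficient is $+1$), while the surviving non-scalar terms are $-2({\frak D}_kN_{ij}{}^k+{\frak D}_kN_{ji}{}^k)$ and $+2(N_i{}^k{}_j+N_j{}^k{}_i)\p_ku$. Pulling out the overall sign and the factor $|\varphi|^2$ gives precisely
\[
\p_t\tilde g_{ij}=-|\varphi|^2\big\{2({\frak D}_kN_{ij}{}^k+{\frak D}_kN_{ji}{}^k)-2\p_pu(N_j{}^p{}_i+N_i{}^p{}_j)-|N|^2 g_{ij}\big\},
\]
which is the claimed formula; I would note that the index $p$ versus $k$ in the $\p u$ term is a relabeling and that $N_j{}^p{}_i=N_i{}^p{}_j$ symmetry is not assumed, both orderings being kept.

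The only genuine obstacle is bookkeeping: reconciling the precise meaning and normalization of the symbol $N^2$ appearing in Lemma 14 of \cite{fei2020c} with the symbols $(N_\pm^2)_{ij}$ and $|N|^2$, and checking that the scalar coefficients add up to exactly $+1$ after the two substitutions rather than, say, $+\tfrac12$ or $+2$. I would resolve this by writing $N^2 g_{ij}$ in terms of $g^{kl}(N_+^2)_{kl}$ or $g^{kl}(N_-^2)_{kl}$, tracing the identity $(N_-^2)_{ij}=2(N_+^2)_{ij}-\tfrac14|N|^2 g_{ij}$ to relate $|N|^2$ to $\tr(N_+^2)$, and thereby pin down the constant. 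Everything else — the index gymnastics in ${\frak D}_kN_{ij}{}^k$ and the identification of $\na_\mu\na^\mu u$ with $\Delta u$ — is routine given the cited results, so no further structural input is needed beyond the two boxed Type IIA identities.
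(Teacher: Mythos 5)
Your proposal is correct and follows exactly the route the paper takes: the paper derives the lemma by substituting the two Type IIA identities $R=\Delta u-|N|^2$ and $(N_-^2)_{ij}=2(N_+^2)_{ij}-\tfrac14|N|^2g_{ij}$ into the Bochner--Kodaira formula quoted from Lemma 14 of \cite{fei2020c}, with the $(N_+^2)_{ij}$ terms cancelling and the scalar terms collecting to $|N|^2g_{ij}$ precisely as you describe. Your caution about the symbol $N^2$ is well placed but resolves the same way the paper implicitly resolves it, namely $N^2=|N|^2$.
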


\smallskip
Next, we derive the flow of $u=\log |\varphi|^2$ and of $g_{ij}=|\varphi|^{-2}\tilde g_{ij}$.
We have
\[\begin{split}
\p_t\log\, {\rm det}\,\tilde g&=\tilde g^{ij}\p_t\tilde g_{ij}=|\varphi|^{-2}g^{ij}
\p_t\tilde g_{ij}
=
-(6R-6\na_\mu\na^\mu u)
=-6(R-\Delta u).
\end{split}\]
In view of the above formula relating $R$, $\Delta u$ and $|N|^2$ in Type IIA geometry, this reduces to
\bea
\p_t\log\, {\rm det}\,\tilde g=6|N|^2.
\eea
Since we also have
\[\begin{split}
\p_t\log|\varphi|^2={1\over 6}\p_t\log {\rm det}\,\tilde g
\end{split}\]
we obtain the flow for $u=\log |\varphi|^2$,
\bea
\p_t\log |\varphi|^2=|N|^2.
\eea
The flow of $g_{ij}$ readily follows
\[\begin{split}
\p_tg_{ij}&=\p_t(|\varphi|^{-2}\tilde g_{ij})
=
|\varphi|^{-2}\p_t\tilde g_{ij}-(\p_t\log|\varphi|^2)|\varphi|^{-2}\tilde g_{ij}
\nonumber\\
&=
|\varphi|^{-2}\p_t\tilde g_{ij}-(\p_t\log|\varphi|^2) g_{ij}.
\end{split}\]
We summarize the formulas in the following lemma:

\begin{lemma}
The flows of the metric $g_{ij}$ and of the scalar function $u=\log|\varphi|^2$ in the Hitchin gradient flow are given by
\[\begin{split}
&\p_tg_{ij}
=
-\big\{2({\frak D}_kN_{ij}{}^k+{\frak D}_kN_{ji}{}^k)-
2\p_pu(N_j{}^p{}_i+N_i{}^p{}_j)\big\}\\
&\p_tu=|N|^2.
\end{split}
\]
\end{lemma}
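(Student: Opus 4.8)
The plan is to read off both evolution equations from the formula for $\p_t\tilde g_{ij}$ recorded in the preceding lemma, by taking a single trace and then undoing the conformal rescaling $\tilde g_{ij}=|\varphi|^2 g_{ij}$. The structural point that makes this work is the one already noted: the Hitchin gradient flow preserves the closed, primitive, positive condition on $\varphi$, so along the flow $(\o,J_\varphi,g_{ij})$ remains an almost-K\"ahler structure with $\o$ held fixed, and we are free to use the Type IIA identities.

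First I would establish $\p_t u=|N|^2$. Since $\o$ is frozen, in fixed local coordinates the volume form $\o^3/3!=\sqrt{\det g}\,\ud x^1\wedge\cdots\wedge\ud x^6$ is $t$-independent, hence so is $\det g$; because $\tilde g_{ij}=|\varphi|^2 g_{ij}$ on a six-manifold this gives $\log\det\tilde g=6u+\log\det g$ and therefore $\p_t\log\det\tilde g=6\,\p_t u$. On the other hand $\p_t\log\det\tilde g=\tilde g^{ij}\p_t\tilde g_{ij}=|\varphi|^{-2}g^{ij}\p_t\tilde g_{ij}$, so I would contract the preceding lemma against $g^{ij}$. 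The divergence terms vanish since $g^{ij}{\frak D}_kN_{ij}{}^k={\frak D}_k\big(g^{ij}N_{ij}{}^k\big)=0$, and the $\p u$-terms vanish since $g^{ij}N_i{}^p{}_j=0$; both vanishings are instances of the classical fact that on an almost-K\"ahler manifold the fully-lowered Nijenhuis tensor $N_{ijk}$ is totally skew-symmetric. The only surviving contribution is $|\varphi|^2|N|^2 g_{ij}$, whose trace is $6|\varphi|^2|N|^2$, so $\tilde g^{ij}\p_t\tilde g_{ij}=6|N|^2$ and hence $\p_t u=|N|^2$.

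Next I would obtain the flow of $g_{ij}$ by differentiating $g_{ij}=|\varphi|^{-2}\tilde g_{ij}$ in time:
\[\p_t g_{ij}=|\varphi|^{-2}\p_t\tilde g_{ij}-(\p_t\log|\varphi|^2)\,|\varphi|^{-2}\tilde g_{ij}=|\varphi|^{-2}\p_t\tilde g_{ij}-(\p_t u)\,g_{ij}.\]
Substituting the preceding lemma for $|\varphi|^{-2}\p_t\tilde g_{ij}$ together with $\p_t u=|N|^2$, the two copies of $|N|^2 g_{ij}$ cancel and exactly the asserted expression for $\p_t g_{ij}$ remains. There is no genuine obstacle here, since the computation is short once the flow of $\tilde g_{ij}$ is in hand; the only points requiring care are the two inputs to the trace step — the skew-symmetry of the Nijenhuis tensor, which kills the ${\frak D}N$ and $N\,\p u$ contractions, and the $t$-independence of $\det g$ in fixed coordinates, which is precisely where it matters that the symplectic form is a fixed background rather than an evolving object. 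A variant that bypasses the first route is to contract the full Bochner--Kodaira expression for $-(\ud\ud^\dagger\varphi)_{jkp}\varphi_{iab}\o^{ka}\o^{pb}+(i\leftrightarrow j)$ displayed above against $g^{ij}$ and then invoke the identities $R=\Delta u-|N|^2$ and $(N_-^2)_{ij}=2(N_+^2)_{ij}-\tfrac14|N|^2 g_{ij}$ to collapse the curvature and Nijenhuis terms to $6|N|^2$; I expect that bookkeeping to be the main, though still elementary, calculation.
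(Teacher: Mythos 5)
Your argument follows the paper's own route almost exactly: trace the formula for $\p_t\tilde g_{ij}$ against $g^{ij}$ to get $\p_t\log\det\tilde g=6|N|^2$, use the $t$-independence of $\det g$ (which you rightly make explicit, whereas the paper only writes $\p_t\log|\varphi|^2=\frac16\p_t\log\det\tilde g$) to conclude $\p_t u=|N|^2$, and then differentiate $g_{ij}=|\varphi|^{-2}\tilde g_{ij}$ so that the two copies of $|N|^2g_{ij}$ cancel. The one point that needs repair is your justification for the vanishing of the traced ${\frak D}N$ and $N\,\p u$ terms: it is not a classical fact that the fully lowered Nijenhuis tensor of an almost-K\"ahler manifold is totally skew-symmetric (that is a nearly-K\"ahler-type condition), and indeed total skew-symmetry would force $(N_-^2)_{ij}=-(N_+^2)_{ij}$, which is incompatible with the identity $(N_-^2)_{ij}=2(N_+^2)_{ij}-\frac14|N|^2g_{ij}$ that this very derivation relies on. The contractions $g^{ij}N_{ij}{}^k=0$ and $g^{ij}N_i{}^p{}_j=0$ do hold here, but for a different reason: they follow from the skew-symmetry of $N$ in its two vector arguments together with the trace-free property of the Nijenhuis tensor in Type IIA geometry established in \cite{fei2020b}, and one also needs ${\frak D}$ to be metric-compatible to pull $g^{ij}$ through the divergence. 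With that substitution the proof is complete and coincides with the paper's; your closing ``variant'' via the full Bochner--Kodaira expression and $R=\Delta u-|N|^2$ is in fact the intermediate form the paper itself records.
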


We note that the Hitchin functional certainly increases along its gradient flow. But the lemma implies a much stronger property, namely that the integrand in the functional is pointwise monotone increasing.
Now the Ricci curvature in Type IIA geometry is given by
\bea
{\frak D}_kN_{ij}{}^k+{\frak D}_kN_{ji}{}^k
=R_{ij}+2(N_-^2)_{ij}-(\na_i\na_ju)^{J}
\eea
so that the above flows can also be rewritten as, with the notation
$(\na_i\na_ju)^J=\dfrac{1}{2}(\na_i\na_ju+J_i{}^pJ_j{}^q\na_p\na_qu)$ from \cite{fei2020b},

\begin{lemma}
The flows of the metric $g_{ij}$ and of the scalar function $u=\log|\varphi|^2$ in terms of the Ricci curvature are given by
\[\begin{split}
&\p_tg_{ij}
=-\big\{2R_{ij}+4(N_-^2)_{ij}-2(\na_i\na_ju)^J-2\p_pu(N_j{}^p{}_i+N_i{}^p{}_j)\big\}
\\
&
\p_tu=|N|^2.
\end{split}
\]
\end{lemma}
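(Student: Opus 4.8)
The plan is to obtain this lemma as a purely algebraic consequence of the preceding lemma together with the displayed identity for the Ricci curvature in Type IIA geometry, namely ${\frak D}_kN_{ij}{}^k+{\frak D}_kN_{ji}{}^k=R_{ij}+2(N_-^2)_{ij}-(\na_i\na_ju)^J$, which is taken from \cite{fei2020b}. The preceding lemma already expresses $\p_tg_{ij}$ in terms of the symmetrized divergence ${\frak D}_kN_{ij}{}^k+{\frak D}_kN_{ji}{}^k$ and the term $\p_pu(N_j{}^p{}_i+N_i{}^p{}_j)$; so the only step needed is to substitute the identity into the first term and distribute the overall factor of $2$. This sends $R_{ij}\mapsto 2R_{ij}$, $2(N_-^2)_{ij}\mapsto 4(N_-^2)_{ij}$, and $-(\na_i\na_ju)^J\mapsto -2(\na_i\na_ju)^J$, while the term $-2\p_pu(N_j{}^p{}_i+N_i{}^p{}_j)$ is left unchanged, yielding exactly the stated expression for $\p_tg_{ij}$. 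The equation $\p_tu=|N|^2$ is simply carried over verbatim from the previous lemma, since it does not involve any curvature rewriting.

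The only points that require attention are bookkeeping ones. First, one should confirm that the quoted identity is stated in \cite{fei2020b} with precisely the sign and symmetrization conventions used here, in particular that $(\na_i\na_ju)^J$ denotes the symmetrized quantity $\tfrac12(\na_i\na_ju+J_i{}^pJ_j{}^q\na_p\na_qu)$ as recalled just above the statement, so that no spurious factor of $2$ is introduced upon insertion. Second, one should verify that both sides of the resulting evolution equation are manifestly symmetric in $i$ and $j$: this holds because $R_{ij}$, $(N_-^2)_{ij}=N^{kp}{}_iN_{pkj}$, and $(\na_i\na_ju)^J$ are all symmetric, and the term $\p_pu(N_j{}^p{}_i+N_i{}^p{}_j)$ is symmetric by construction.

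There is no genuine analytic obstacle in this lemma; its substance lies entirely in the earlier results that are assumed, namely the Bochner--Kodaira computation of $\ud\ud^\dagger\varphi$ from \cite{fei2020c}, Lemma~14, and the Type IIA identities relating $(N_-^2)_{ij}$, $(N_+^2)_{ij}$, $R$, and $\Delta u$ from \cite{fei2020b}, as well as the identity converting ${\frak D}_kN_{ij}{}^k+{\frak D}_kN_{ji}{}^k$ into the Ricci tensor. If one wanted a self-contained derivation rather than a citation, the main work would be re-deriving that last identity: commuting covariant derivatives to turn $\mathfrak D_kN_{ij}{}^k$ into a curvature term plus the Hessian of $u$ (the Hessian appearing because $\na_i\log|\varphi|^2$ enters the torsion of the Type IIA connection) plus quadratic Nijenhuis corrections. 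Given that identity, however, the proof is the one-line substitution described above.
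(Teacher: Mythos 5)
Your proposal is correct and follows exactly the paper's own route: the paper derives this lemma by substituting the displayed identity ${\frak D}_kN_{ij}{}^k+{\frak D}_kN_{ji}{}^k=R_{ij}+2(N_-^2)_{ij}-(\na_i\na_ju)^{J}$ into the preceding lemma and distributing the factor of $2$, with $\p_tu=|N|^2$ carried over unchanged. Your bookkeeping checks (the convention for $(\na_i\na_ju)^J$ and the symmetry in $i,j$) are sensible but not part of the paper's argument, which treats this as an immediate rewriting.
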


There is yet another natural way of rewriting this flow, using the Ricci curvature formula
\cite[Eq. (6.53), Eq. (6.59)]{fei2020b}. Thus we get

\begin{lemma}
The flows of the metric $g_{ij}$ and of the scalar function $u=\log |\varphi|^2$ in the Hitchin gradient flow can also be written as
\begin{equation}\label{chi}
\begin{split}
&\p_t g_{ij}=-R_{ij}+R_{Ji,Jj}+2\p_pu(N_i{}^p{}_j+N_j{}^p{}_i)\\
&\p_t u=|N|^2
\end{split}
\end{equation}
\end{lemma}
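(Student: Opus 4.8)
The plan is to obtain (\ref{chi}) from the previous lemma by substituting a single known identity, with no further computation involving the flow itself. First I would write out the previous lemma (the one expressing $\p_t g_{ij}$ in terms of the Ricci curvature) as
\[
\p_t g_{ij}=-2R_{ij}-4(N_-^2)_{ij}+2(\na_i\na_ju)^J+2\p_pu\,(N_j{}^p{}_i+N_i{}^p{}_j),
\]
and record two reductions. The term $2\p_pu\,(N_j{}^p{}_i+N_i{}^p{}_j)$ already agrees, after reordering the sum, with the last term $2\p_pu\,(N_i{}^p{}_j+N_j{}^p{}_i)$ of (\ref{chi}); and the equation $\p_tu=|N|^2$ is unchanged, being inherited verbatim. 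Hence the whole statement reduces to the pointwise tensor identity
\[
-2R_{ij}-4(N_-^2)_{ij}+2(\na_i\na_ju)^J=-R_{ij}+R_{Ji,Jj},
\]
equivalently
\[
R_{ij}+R_{Ji,Jj}=2(\na_i\na_ju)^J-4(N_-^2)_{ij}.
\]

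Next I would invoke the Ricci curvature formula of \cite[Eq.~(6.53), Eq.~(6.59)]{fei2020b}, which in Type IIA geometry expresses the $J$-invariant part $\tfrac12(R_{ij}+R_{Ji,Jj})$ of the Ricci tensor in terms of the $J$-invariant Hessian $(\na_i\na_ju)^J$ of $u=\log|\varphi|^2$ and the quadratic Nijenhuis term $(N_-^2)_{ij}$; this is exactly the displayed identity. (As a sanity check, in the integrable limit $N=0$ it collapses to $R_{ij}=(\na_i\na_ju)^J$, which is the Kähler relation between the Ricci form and $-\sqrt{-1}\,\p\bpt\log|\Omega|^2_g$.) Substituting this expression for $R_{ij}+R_{Ji,Jj}$ into the formula for $\p_t g_{ij}$ above yields (\ref{chi}) at once.

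The only step requiring care is the bookkeeping of conventions: the earlier lemmas came out of the Bochner--Kodaira formulas of \cite{fei2020c}, while Eq.~(6.53) and Eq.~(6.59) are stated in the curvature conventions of \cite{fei2020b}, so one must check the signs and verify that the $J$-transforms sit on the correct indices in both $R_{Ji,Jj}$ and $(\na_i\na_ju)^J$. If the cited equations are written using $(N_+^2)_{ij}$ instead of $(N_-^2)_{ij}$, one first converts with the identity $(N_-^2)_{ij}=2(N_+^2)_{ij}-\tfrac14|N|^2g_{ij}$ recalled above; the resulting trace terms proportional to $g_{ij}$ must then cancel, which provides a useful internal consistency check. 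Beyond this convention-matching there is no real obstacle — the lemma is a purely algebraic rewriting of its predecessor.
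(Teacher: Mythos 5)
Your proposal is correct and follows the same route as the paper: the paper obtains (\ref{chi}) from the preceding lemma precisely by substituting the Ricci curvature formula of \cite[Eq.~(6.53), Eq.~(6.59)]{fei2020b}, which expresses the $J$-invariant part of the Ricci tensor through $(\na_i\na_ju)^J$ and $(N_-^2)_{ij}$, exactly the identity $R_{ij}+R_{Ji,Jj}=2(\na_i\na_ju)^J-4(N_-^2)_{ij}$ you isolate. Your write-up is in fact more explicit than the paper's, which states the substitution in one line without displaying the intermediate identity.
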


\begin{rmk}
Formally the coupled system (\ref{chi}) makes sense for any compact symplectic manifold, therefore it can be viewed as a generalization of the gradient flow of Hitchin's functional (\ref{hi}) to arbitrary symplectic manifolds.
\end{rmk}
\begin{rmk}
The system (\ref{chi}) also shows that the gradient flow of the Hitchin functional can be viewed as a perturbation of the anti-complexified Ricci flow of L\^e-Wang \cite{le2001}. This is the gradient flow of the Blair-Ianus functional \cite{blair1986}, which is the $L^2$ norm of the Nijenhuis tensor $N$. Explicitly, the gradient flow can expressed as
\[\p_tg_{ij}=-R_{ij}+R_{Ji,Jj}.\]
We see that the first equation in (\ref{chi}) only differs from the anti-complexified Ricci flow by a first order term. \end{rmk}
\begin{rmk}
If we run the flow (\ref{hi}) on locally homogeneous half-flat symplectic manifolds (as in the case of \cite[Section 9.3.2]{fei2020b}), the induced flow of $g$ is exactly L\^{e}-Wang's anti-complexified Ricci flow because that $u$ is a constant in space. In particular, this flow can be used to find optimal compatible almost complex structures.
\end{rmk}

\section{Examples}

In this section, we present a few explicit examples of the gradient flow of 6D-Hitchin's functional on locally homogeneous symplectic half-flat manifolds. These manifolds have been studied in \cite{fei2020b} from the perspective of the Type IIA flow.

\begin{ex}
Let $M$ be the nilmanifold constructed by de Bartolomeis-Tomassini \cite[Example 5.2]{bartolomeis2006}. The Lie algebra of the nilpotent Lie group is characterized by invariant 1-forms $\{e^1,\dots,e^6\}$ satisfying
\[\begin{split}
&\ud e^1=\ud e^2=\ud e^3=\ud e^5=0,\nonumber\\
&\ud e^4=e^{15},\qquad \ud e^6=e^{13}.\nonumber
\end{split}
\]
Clearly $\omega=e^{12}+e^{34}+e^{56}$ defines an invariant symplectic structure. Moreover, this nilpotent Lie group admits co-compact lattices so all the constructions descend to compact nilmanifolds. Consider the ansatze
\bea
\label{data:nil}
\varphi=\varphi_{a,b}=(1+a)e^{135}-e^{146}-e^{245}-e^{236}+b(e^{134}-e^{156}),
\eea
it is easy to check that $\varphi_{a,b}$ is primitive and closed for any $a,b$. The positivity condition for $\varphi_{a,b}$ is that $\dfrac{1}{16}|\varphi|^4=1+a-b^2>0$. By straightforward calculations, we get
\bea
|\varphi|^2\hat\varphi=4((1+a-b^2)e^1\!\wedge\!(e^{36}+e^{45})+e^2\wedge(be^{34}+(1+a)e^{35}\!-\!e^{46}\!-\!be^{56})).\nonumber
\eea
It follows that
\[\begin{split}
&
\ud(|\varphi|^2\hat\varphi)=4e^{12}(e^{34}+2be^{35}-e^{56}),\nonumber\\
&
\Lambda_\omega\ud(|\varphi|^2\hat\varphi)=4(e^{34}+2be^{35}-e^{56}),\nonumber\\
&
\ud\Lambda_\omega\ud(|\varphi|^2\hat\varphi)=8e^{135}.\nonumber
\end{split}
\]
So the gradient flow of 6D-Hitchin's functional reduces to the ODE system
\[\begin{cases}&\dot a = \dfrac{2}{\sqrt{1+a-b^2}}\\
&\dot b=0
\end{cases},\]
which can be solved explicitly
\[\begin{cases}&(1+a-b_0^2)^{3/2}=(1+a_0-b_0^2)^{3/2}+3t,\\
&b=b_0
\end{cases}.\]
In particular the flow exists for all time and
\[|N|^2=(1+a-b^2)^{-3/2}=\frac{1}{(1+a_0-b_0^2)^{3/2}+3t}\to 0,\]
as $t$ goes to infinity.
\end{ex}

\begin{ex}
Consider the symplectic half-flat structure on the solvmanifold $M$ constructed by Tomassini and Vezzoni in \cite[Theorem 3.5]{tomassini2008}. The geometry of this solvmanifold is characterized by invariant 1-forms $\{e^j\}_{j=1}^6$ satisfying
\[\begin{split}
&
\ud e^1 = -\lambda e^{15},\quad \ud e^2= \lambda e^{25},\quad \ud e^3 = -\lambda e^{36},
\nonumber\\
&
\ud e^4= \lambda e^{46},\quad \ud e^5=0,\quad \ud e^6=0,\nonumber
\end{split}
\]
where $\lambda=\log \dfrac{3+\sqrt{5}}{2}$. One can easily check that $\omega=e^{12}+e^{34}+e^{56}$ is an invariant symplectic form on $M$. Consider the ansatze
\bea
\varphi=\alpha(e^{135}+e^{136})+\beta(e^{145}-e^{146})+\gamma(e^{235}-e^{236})-\delta(e^{245}+e^{246}).\label{solvans}
\eea
A direct calculation gives
\[\begin{split}
|\varphi|^2\hat\varphi&=8(-\alpha\beta\gamma(e^{135}-e^{136})+\alpha\beta\delta(e^{145}+e^{146})\nonumber\\
&+\alpha\gamma\delta(e^{235}+e^{236}) +\beta\gamma\delta(e^{245}-e^{246})).\nonumber
\end{split}
\]
The nondegenerate condition is that $|\varphi|^4=64\alpha\beta\gamma\delta>0$. It follows that
\[\begin{split}
\ud(|\varphi|^2\hat\varphi)&=16\lambda(\alpha\beta\gamma e^{1356}+\alpha\beta\delta e^{1456}-\alpha\gamma\delta e^{2356}+\beta\gamma\delta e^{2456}),\nonumber\\
\Lambda_\omega \ud(|\varphi|^2\hat\varphi)&=16\lambda(\alpha\beta\gamma e^{13}+\alpha\beta\delta e^{14}-\alpha\gamma\delta e^{23}+\beta\gamma\delta e^{24}),\nonumber\\
\ud\Lambda_\omega\ud(|\varphi|^2\hat\varphi)&=16\lambda^2(\alpha\beta\gamma(e^{135}+e^{136})+\alpha\beta\delta(e^{145}-e^{146})\nonumber\\
&+\alpha\gamma\delta(e^{235}-e^{236})- \beta\gamma\delta(e^{245}+e^{246})).\nonumber
\end{split}
\]
After linear time rescaling, the gradient flow of Hitchin's functional under our ansatze reduces to
\[\begin{split}
&
\dot\alpha=\frac{\alpha\beta\gamma}{\sqrt{\alpha\beta\gamma\delta}},\qquad
\dot\beta=\frac{\alpha\beta\delta}{\sqrt{\alpha\beta\gamma\delta}},\nonumber\\
&
\dot\gamma=\frac{\alpha\gamma\delta}{\sqrt{\alpha\beta\gamma\delta}},\qquad
\dot\delta=\frac{\beta\gamma\delta}{\sqrt{\alpha\beta\gamma\delta}}.\nonumber
\end{split}
\]
For simplicity, let us assume that all of $\alpha,\beta,\gamma,\delta$ are positive. It is easy to see that there exist positive constants $C_1$ and $C_2$ such that $\alpha(t)=C_1\delta(t)$ and $\beta(t)=C_2\gamma(t)$. The ODE system simplifies to
\bea
\dot\gamma=\sqrt{\frac{C_1}{C_2}}\delta,\qquad
\dot\delta=\sqrt{\frac{C_2}{C_1}}\gamma.\nonumber
\eea
Again this system can be solve explicitly as
\[\begin{split}
&\alpha=\sqrt{C_1}(Ae^t+Be^{-t}),\quad \beta=\sqrt{C_2}(Ae^t-Be^{-t}),\nonumber\\
&\gamma=\frac{1}{\sqrt{C_2}}(Ae^t-Be^{-t}),\quad \delta=\frac{1}{\sqrt{C_1}}(Ae^t+Be^{-t}),\nonumber
\end{split}
\]
where $A>0$, $B$ are constants determined by initial data. In particular the flow exists for all time and $\lim_{t\to\infty}|\varphi|^2=\infty$. However, the limit $\lim_{t\to\infty}J_t=J_\infty$ does exist. This is because
\[\begin{split}\varphi_\infty:=\lim_{t\to\infty}\frac{\varphi}{|\varphi|}=~&\sqrt{\frac{C_1}{8}}(e^{135}+e^{136})+\sqrt{\frac{C_2}{8}}(e^{145}-e^{146})\\
&+\sqrt{\frac{1}{8C_2}}(e^{235}-e^{236})-\sqrt{\frac{1}{8C_1}}(e^{245}+e^{246})\end{split}\]
exists. In fact, $J_\infty$ is a harmonic almost complex structure in the sense of \cite{le2001}, namely the Ricci curvature is $J_\infty$-invariant. In addition, $\varphi_\infty$ satisfies
\[\ud\Lambda_\omega\ud\hat\varphi_\infty=2\lambda^2\varphi_\infty.\]
This case also provides an example of convergence of anti-complexified Ricci flow to a non-K\"ahler metric.
\end{ex}

\begin{rmk}
When $|\varphi|$ is a constant over space, the Hitchin gradient flow is simply a time-rescaled version of the Type IIA flow. Therefore we can use Raffero's technique \cite{raffero2021} to produce many special solutions to the Hitchin gradient flow (\ref{hi}) as well.
\end{rmk}

\section{Additional remarks}

It may be worth considering the following $\epsilon$-regularization of the Hitchin gradient flow
\bea
\label{epsilon}
\p_t\varphi=\ud\Lambda_\omega\ud(|\varphi|^\epsilon \hat\varphi)
\eea
for each $\epsilon>0$. The same arguments for the Type IIA flow show that the flow preserves primitiveness and should be a well-defined flow of Type IIA geometries. While we do not expect that the corresponding solutions will have a limit as $\epsilon\to 0$, it is conceivable that certain important notions may have a limit. A model situation may be Landau-Ginzburg models and renormalized energies. It is also intriguing that the dual Ricci flow can be interpreted as another limit of this regularization.



\end{document}